\numberwithin{equation}{section}
\DeclareMathOperator{\RE}{Re}
 \theoremstyle{plain}
\newtheorem{thm}{Theorem}[section]
\newtheorem{cor}[thm]{Corollary}
\newtheorem{exm}[thm]{Example}
\newtheorem{lem}[thm]{Lemma}
\theoremstyle{definition}
\newtheorem{defn}[thm]{Definition}
\theoremstyle{remark}
\newtheorem{rem}[thm]{Remark}
\newcommand{\pe}{{\mathcal P}}
\newcommand{\de}{{\mathbb D}}
\newcommand{\ce}{{\mathbb C}}
\begin{document}

\title[]{ Mittag-Leffler Operator Connected with Certain Subclasses of Bazilevi\u{c} functions}

\author[Om Ahuja, Asena \c{C}etinkaya, Naveen Kumar Jain]{Om Ahuja, Asena \c{C}etinkaya, Naveen Kumar Jain}
\address{\textsc{Om  Ahuja}\\
	Department of Mathematical Sciences,\\
	Kent State University, Ohio, 44021, U.S.A }
	\email{oahuja@kent.edu}
		
\address{\textsc{Asena \c{C}etinkaya}\\
		Department of Mathematics and Computer Science\\
		\.{I}stanbul K\"{u}lt\"{u}r University, \.{I}stanbul, Turkey}
	\email{asnfigen@hotmail.com}
	
\address{\textsc{Naveen Kumar Jain}\\
	Department of Mathematics\\
	Aryabhatta College,  Delhi, 110021, India}
	\email{naveenjain@aryabhattacollege.ac.in}

\begin{abstract}
In this paper, we introduce  a new generalized class of analytic functions involving the  Mittag-Leffler operator and Bazilevi\u{c} functions. We examine inclusion properties,  radius problems and an application of the generalized Bernardi-Libera-Livingston integral operator for this function class.
\end{abstract}
\subjclass[2010]{30C45, 30C50}
\keywords{Bazilevi\u{c} functions, integral operator, Mittag-Leffler function.}

\maketitle

\section{Introduction}
Let $\mathcal{A}$ be the family of all functions of the form
\begin{equation}\label{aa}
	f(z)=z+\sum_{n=2}^\infty a_{n}z^{n},
\end{equation}
that are analytic in the open unit disc $\mathbb{D}:=\{z : |z| < 1\}$. Denote by $\mathcal{S}$ the subfamily of  $\mathcal{A}$ consisting of functions that are univalent in $\de$. Let $\mathcal{K}$, $\mathcal{S}^\ast,$ and $\mathcal{C}$ be the well-known subclasses of $\mathcal{S}$ consisting of functions that are, respectively, convex, starlike (w.r.t. the origin) and close-to-convex in $\de$. The class $\pe$ of the functions $p$ that are analytic  in $\de$ and satisfy the conditions $p(0)=1$ and $\RE p(z)>0$ in $\de$ is also well-known in the theory of the univalent functions. For definitions, properties and history of these classes, one may refer to a survey article by the first author \cite{Ahuja} and \cite{Goodman}. Recently, Ali {\it et al.} \cite{Ali} and Anand {\it et al.} \cite{Anand} studied these classes to find various radius problems.

The Mittag-Leffler function $E_\alpha$, defined by
\begin{equation}\label{eq:mıttag}
	E_\alpha(z)=\sum_{n=0}^\infty\frac{z^n}{\Gamma(\alpha n+1)}, \quad (\alpha\in\ce; \RE (\alpha)>0; z\in\ce)
\end{equation}
was introduced in 1903 by G.M. Mittag-Leffler \cite{Mittag, Mittag2} in connection with his method of summation of some divergent series. A general form of this special function \eqref{eq:mıttag} given by
\begin{equation}\label{eq:wiman}
	E_{\alpha,\beta}(z)=\sum_{n=0}^\infty\frac{z^n}{\Gamma(\alpha n+\beta)}, \quad (\alpha, \beta\in\ce; \RE (\alpha)>0;  \RE(\beta)>0; z\in\ce)
\end{equation}
was studied by Wiman \cite{Wiman} in 1905. During this last twenty-five years, interest in Mittag-Leffler type functions \eqref{eq:mıttag} and \eqref{eq:wiman}  have significantly increased among engineers and scientists due to their applications in numerous applied problems, such as fluid flows, diffusive transport skin to diffusion, electric networks, probability, and statistically distribution theory. For detailed account of various properties and references related to applications one may refer to \cite{Agarwal, Agarwal2, Gor, Haubold, Ruzhansky}. Motivated by Sivastava and Tomovski \cite{Sri}, Attiya \cite{Attiya} and Yassen {\it et al.} \cite{Yassen} recently studied certain applications of generalized Mittag-Leffler operator involving differential subordination. Moreover, Jain {\it et al.} \cite{Ja}  studied certain unified integrals involving a multivariate Mittag-Leffler function.

Corresponding to the  function $E_{\alpha,\beta}$, Elhaddad {\it et al.} \cite{El} introduced the  Mittag-Leffler linear operator $\mathcal{E}^{m}_{\lambda,\alpha,\beta} f:\mathcal{A}\rightarrow\mathcal{A}$ given by
\begin{align}\label{eq:sri-tom}
	\mathcal{E}^{m}_{\lambda,\alpha,\beta}f(z)&=z+\sum_{n=2}^\infty\frac{\Gamma(\beta)(1+(n-1)\lambda)^m}{\Gamma(\alpha (n-1)+\beta)}a_{n}z^{n},
\end{align}
where $m\in\mathbb{N}_0=\mathbb{N}\cup\{0\}$, $\lambda\geq 0$, $\alpha, \beta\in\ce$, $\RE (\alpha)>0$, $\RE (\beta)>0$, and when $\RE(\alpha)=0$, then $\beta\neq 0$. From \eqref{eq:sri-tom}, the following recurrence formula can be easily  obtained:
\begin{equation}\label{eq:recurrence}
	\mathcal{E}^{m+1}_{\lambda,\alpha,\beta}f(z)=(1-\lambda)\mathcal{E}^{m}_{\lambda,\alpha,\beta}f(z)+\lambda z(\mathcal{E}^{m}_{\lambda,\alpha,\beta}f(z))',
\end{equation}
where $m\in\mathbb{N}_0$,  $\lambda\geq 0$. For suitable values of the parameters $m,\alpha,\beta$, and $\lambda$, we may get several linear operators; for example
\begin{enumerate}
	\item[1.] For $\alpha=0$ and $ \beta=1$, we get Al-Oboudi operator \cite{Al}.
	\item[2.] For $\alpha=0$, $ \beta=1$ and $\lambda=1$, we get S\u{a}l\u{a}gean operator \cite{Sal}.
	\item[3.] For $m=0$ and $\lambda=1$, we get the operator
	$$\mathbb{E}_{\alpha,\beta}(z)=z\Gamma(\beta)E_{\alpha,\beta}(z)=z+\sum_{n=2}^\infty\frac{\Gamma(\beta)}{\Gamma(\alpha (n-1)+\beta)}z^{n}.$$
\end{enumerate}

Let us denote by $\mathcal{B}(\vartheta,\tau,g,p)$ or briefly denote by  $\mathcal{B}$, a class of functions $f\in\mathcal{A}$ for which $p\in\pe$, $g\in\mathcal{S}^\ast$ and real numbers $\vartheta,\tau$ with $\vartheta>0$ such that
\begin{equation}\label{eq:bazilevic}
f(z)=\bigg[(\vartheta+i\tau)\int_0^zp(t)g(t)^\vartheta t^{i\tau-1}dt\bigg]^{1/(\vartheta+i\tau)},
\end{equation}
where powers are taken as principal values. Bazilevi\u{c} \cite{Bazil} proved that $\mathcal{B}\subset\mathcal{S}$. In fact, it is known that $\mathcal{K}\subset\mathcal{S}^\ast\subset\mathcal{C}\subset\mathcal{B}\subset\mathcal{S}$. For $\vartheta>0$ and $\rho<1$, we define
$$\mathcal{B}_1(\vartheta,\rho)=\bigg\{f\in\mathcal{A}: \RE\bigg(\frac{zf'(z)}{f^{1-\vartheta}(z)g^\vartheta(z)}\bigg)>\rho, z\in\de \bigg\}.$$
For $g(z)\equiv z$, we obtain
$$\mathcal{B}_2(\vartheta,\rho)=\bigg\{f\in\mathcal{A}: \RE\bigg(\frac{zf'(z)}{f(z)}\bigg(\frac{f(z)}{z}\bigg)^\vartheta\bigg)>\rho, z\in\de \bigg\},$$
$$\mathcal{B}_3(\rho):=\mathcal{B}_2(0,\rho)=\bigg\{f\in\mathcal{A}: \RE\bigg(\frac{zf'(z)}{f(z)}\bigg)>\rho, z\in\de \bigg\}=\mathcal{S}^\ast(\rho),$$
$$\mathcal{B}_4(\rho):=\mathcal{B}_2(1,\rho)=\bigg\{f\in\mathcal{A}: \RE\big(f'(z)\big)>\rho, z\in\de \bigg\}=\pe'(\rho).$$
In view of \eqref{eq:bazilevic}, Singh \cite{Singh} observed that $\mathcal{B}_1(\vartheta,0)$, $\mathcal{B}_2(\vartheta,0)$, $\mathcal{B}_3(0)=\mathcal{S}^\ast$, and $\mathcal{B}_4(0)=\pe'$ are subclasses of $\mathcal{B}$. For further details, one may refer to \cite{Bazil, Ponu}.

 In 1976, Padmanabhan \cite{Padma} introduced the class $\pe_{k}(\rho)$ of analytic functions $p$ defined in $\de$ satisfying the properties $p(0)=1$ and
\begin{equation}\label{eq.bounded}
	\int^{2\pi}_{0}\bigg|\frac{\RE p(z)-\rho}{1-\rho}\bigg|d\theta\leq k \pi,
\end{equation}
where $z=re^{i \theta}$, $k\geq 2 $ and $ 0\leq \rho < 1$.  In fact, he proved the following important result.
\begin{lem}\cite{Padma} If $p\in\pe_{k}(\rho)$, then
	$$
	p(z)=\frac{1}{2} \int^{2\pi}_{0} \frac{1+(1-2\rho)ze^{-i\theta}}{1-ze^{-i\theta}} d\mu(\theta),
	$$
	where $\mu(\theta)$ is a function with bounded variation on $[0,2\pi]$ such that
	$$
	\int^{2\pi}_{0} d\mu(\theta)=2\pi \quad\text{and}\quad \int^{2\pi}_{0} |d\mu(\theta)| \leq k \pi.
	$$
\end{lem}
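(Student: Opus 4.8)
The plan is to deduce the stated formula from the classical Riesz--Herglotz representation of harmonic functions whose $L^1$-means over circles are uniformly bounded. First I would pass to the normalized function
\[
h(z)=\frac{p(z)-\rho}{1-\rho},
\]
which is analytic in $\de$ with $h(0)=1$, and whose real part $U:=\RE h=\frac{\RE p-\rho}{1-\rho}$ is harmonic in $\de$. With this substitution the defining condition \eqref{eq.bounded} reads exactly
\[
\int_0^{2\pi}\bigl|U(re^{i\theta})\bigr|\,d\theta\le k\pi\qquad(0<r<1),
\]
so the circular $L^1$-means of $U$ are bounded by $k\pi$, uniformly in $r$.

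The heart of the argument is to realize $U$ as the Poisson integral of a finite signed measure. For each $r\in(0,1)$ the assignment $d\mu_r(\theta):=U(re^{i\theta})\,d\theta$ defines a real Borel measure on $[0,2\pi]$ of total variation $\int_0^{2\pi}|d\mu_r|\le k\pi$; these measures therefore lie in a fixed ball of $C[0,2\pi]^\ast$. By Helly's selection theorem (equivalently, weak-$\ast$ compactness) I would extract a sequence $r_n\to 1^-$ with $\mu_{r_n}\to\mu$ weak-$\ast$, where by lower semicontinuity of the variation norm $\int_0^{2\pi}|d\mu(\theta)|\le k\pi$. Writing the Poisson formula for $U$ on each subdisc $|z|<r_n$ and letting $n\to\infty$, using that the Poisson kernel $P(r,t)=\tfrac{1-r^2}{1-2r\cos t+r^2}$ varies continuously in its parameters, gives
\[
U(z)=\frac{1}{2\pi}\int_0^{2\pi}\frac{1-|z|^2}{|1-ze^{-i\theta}|^2}\,d\mu(\theta),\qquad z\in\de.
\]
Taking $z=0$ yields $\tfrac{1}{2\pi}\int_0^{2\pi}d\mu(\theta)=U(0)=1$, that is $\int_0^{2\pi}d\mu(\theta)=2\pi$. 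I expect this passage from the uniform $L^1$-bound to the representing measure to be the main obstacle; the remaining steps are purely algebraic.

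Finally I would reconstruct the analytic function from its boundary measure. Since $h$ is analytic, $\RE h=U$, and $h(0)=1$ is real, the Riesz--Herglotz formula forces
\[
h(z)=\frac{1}{2\pi}\int_0^{2\pi}\frac{1+ze^{-i\theta}}{1-ze^{-i\theta}}\,d\mu(\theta),
\]
the additive imaginary constant vanishing because both sides equal $1$ at $z=0$. Substituting $p=\rho+(1-\rho)h$, absorbing $\rho$ into the integral via $\tfrac{1}{2\pi}\int_0^{2\pi}d\mu=1$, and applying the elementary identity
\[
\rho+(1-\rho)\,\frac{1+w}{1-w}=\frac{1+(1-2\rho)w}{1-w}\qquad(w=ze^{-i\theta})
\]
then produces
\[
p(z)=\frac{1}{2\pi}\int_0^{2\pi}\frac{1+(1-2\rho)ze^{-i\theta}}{1-ze^{-i\theta}}\,d\mu(\theta),
\]
which is the claimed representation; the normalizing constant in front is pinned by $p(0)=1$, and the side conditions $\int_0^{2\pi}d\mu=2\pi$ and $\int_0^{2\pi}|d\mu|\le k\pi$ have already been established.
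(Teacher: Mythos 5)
The paper does not prove this lemma at all: it is quoted verbatim from Padmanabhan--Parvatham \cite{Padma}, so there is no internal proof to compare against. Your argument is a correct, self-contained derivation by the standard route for such representation theorems: normalize to $h=(p-\rho)/(1-\rho)$, observe that the defining condition gives a uniform $L^1$-bound $\int_0^{2\pi}|\RE h(re^{i\theta})|\,d\theta\le k\pi$, extract a weak-$\ast$ limit $\mu$ of the measures $\RE h(re^{i\theta})\,d\theta$ by Helly's theorem (with $\int|d\mu|\le k\pi$ by lower semicontinuity of the variation norm), recover $\RE h$ as the Poisson integral of $\mu$, pass to the Herglotz formula for $h$ using $h(0)=1$ to kill the imaginary constant, and undo the normalization with the identity $\rho+(1-\rho)\frac{1+w}{1-w}=\frac{1+(1-2\rho)w}{1-w}$. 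All of these steps are sound. One point worth flagging: your computation produces the prefactor $\frac{1}{2\pi}$, not the $\frac{1}{2}$ printed in the lemma, and yours is the internally consistent one --- with $\int_0^{2\pi}d\mu=2\pi$ the statement as printed would give $p(0)=\pi$ rather than $p(0)=1$. The $\frac12$ in the paper is evidently a typo inherited from the alternative normalization $\int d\mu=2$, $\int|d\mu|\le k$ used in some sources; your version and the intended statement are equivalent up to rescaling $\mu$.
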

From \eqref{eq.bounded}, it is observed that   $p\in \pe_{k}(\rho)$ if and only if there exists  $p_{1},p_{2}\in\pe(\rho)$ such that (see \cite{Noor})
\begin{equation}\label{e1.8}
	p(z)=\left(\frac{k}{4}+\frac{1}{2}\right)p_{1}(z)-\left(\frac{k}{4}-\frac{1}{2}\right)p_{2}(z),\quad (z\in\de).
\end{equation}

We note that for $\rho=0$, we obtain the class $\pe_{k}(0):=\pe_{k}$ defined by Pinchuk \cite{Pinchuk}. For $k=2$, we get $\pe_{2}(\rho):=\pe(\rho)$ the class of analytic functions with positive real part greater than $\rho$, and for $k=2$, $\rho=0$ we have the class $\pe_{2}(0):=\pe$ of functions with positive real part.

Now, we define the following generalized  functions class of analytic functions involving the operator $\mathcal{E}^{m}_{\lambda,\alpha,\beta}f$ given by
\eqref{eq:sri-tom}, certain subclasses of  Bazilevi\u{c} functions,   and the class $\pe_{k}(\rho)$ as:

\begin{defn}\label{def1.2} Let  $k\geq 2 $, \ $ 0\leq \rho < 1$,\ $\vartheta>0$, $m\in\mathbb{N}_0$, $\lambda\geq 0$, $\gamma\in\ce$  such that $\RE (\gamma)>0.$ Then, a function $f\in\mathcal{A}$ is in the class $\mathcal{M}^{m,\gamma}_{\lambda,\alpha,\beta}(k,\vartheta,\rho)$  if it satisfies the  condition:
	\begin{equation}\label{eq:def1}
	\bigg\{(1-\gamma)\bigg(\frac{\mathcal{E}^{m}_{\lambda,\alpha,\beta}f(z)}{z}\bigg)^\vartheta+\gamma\bigg(\frac{\mathcal{E}^{m+1}_{\lambda,\alpha,\beta}f(z)}{z}\bigg)\bigg(\frac{\mathcal{E}^{m}_{\lambda,\alpha,\beta}f(z)}{z}\bigg)^{\vartheta-1}\bigg\}\in\pe_{k}(\rho),
	\end{equation}
where $z\in\de$.
\end{defn}
\begin{rem} We observe that $\mathcal{M}^{0,1}_{1,0,1}(2,\vartheta,0)=\mathcal{B}_2(\vartheta,0)$,  $\mathcal{M}^{0,1}_{1,0,1}(2,0,0)=\mathcal{B}_3(0)$ and $\mathcal{M}^{0,1}_{1,0,1}(2,1,0)=\mathcal{B}_4(0)$ are subclasses of $\mathcal{B}$; see \cite{Bazil} and \cite{Singh}.
\end{rem}
\begin{exm} Setting $m=0$, $\lambda=1$, $\alpha=0$ and  $\beta=1$, we get
	{\small
	$$\mathcal{M}^{0,\gamma}_{1,0,1}(k,\vartheta,\rho)=:\mathcal{M}^{\gamma}(k,\vartheta,\rho)=\bigg\{f\in\mathcal{A}: (1-\gamma)\bigg(\frac{f(z)}{z}\bigg)^\vartheta+\gamma\frac{zf'(z)}{f(z)}\bigg(\frac{f(z)}{z}\bigg)^\vartheta\in\pe_k(\rho),\  0\leq \rho < 1 \bigg\} $$ }
	studied by Noor \cite{Noor}.	
\end{exm}
\begin{exm} Setting $m=0$, $\lambda=1$, $\alpha=0$, $\gamma=1$, $\beta=1$ and $k=2$, we get
$$\mathcal{M}^{0,1}_{1,0,1}(2,\vartheta,\rho)=:\mathcal{B}_2(\vartheta,\rho)=\bigg\{f\in\mathcal{A}:\frac{zf'(z)}{f(z)}\bigg(\frac{f(z)}{z}\bigg)^\vartheta\in\pe(\rho),\  0\leq \rho < 1 \bigg\} $$
defined by Bazilevi\u{c} \cite{Bazil}.	
\end{exm}
\begin{exm} Setting $m=0$, $\lambda=1$, $\alpha=0$, $\beta=1$, $\gamma=0$, $\vartheta=1$ and $k=2$, we get
	$$\mathcal{M}^{0,0}_{1,0,1}(2,1,\rho)=:\mathcal{M}(\rho)=\bigg\{f\in\mathcal{A}:\frac{f(z)}{z}\in\pe(\rho),\  0\leq \rho < 1 \bigg\} $$
	studied by Chen \cite{Chen}.	
\end{exm}

In the present investigation, using the Mittag-Leffler operator and Bazilevi˘c functions we introduce a new generalized class of analytic functions. We investigate inclusion properties, radius problems and an application of the generalized Bernardi-Libera-Livingston
integral operator for this function class. We shall also make connections with some  earlier works.

We list some preliminary lemmas required for proving our main results.
\begin{lem}\label{lem1}\cite{Miller} Let $u=u_1+iu_2$ and $v=v_1+iv_2$,  and suppose  $\Psi(u,v)$ is a complex function satisfying the conditions
\begin{enumerate}
\item[i)] $\Psi(u,v)$ is  continuous in a domain $D\subset \ce^2$,
\item[ii)] $(1,0)\in D$  and $\RE\Psi(1,0)>0$,
\item[iii)] $\RE\Psi(iu_2,v_1)\leq0$, whenever $(iu_2,v_1)\in D$ and $v_1\leq -\frac{1+u_2^2}{2}$.
\end{enumerate}

If $p(z)=1+c_1z+c_2z^2+...$ is an analytic function	in $\de$ such that $(p(z),zp'(z))\in D$ and $\RE\Psi(p(z),zp'(z))>0$ for $z\in \de$, then $\RE p(z)>0$ in $\de$.
\end{lem}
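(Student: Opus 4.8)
The plan is to argue by contradiction, reducing the claim to a single ``first touching point'' derivative estimate extracted from Jack's lemma. Since $p(0)=1$ we have $\RE p(0)=1>0$, and condition (ii) guarantees that the standing hypothesis $\RE\Psi(p(z),zp'(z))>0$ is consistent at the origin (where $(p(0),0\cdot p'(0))=(1,0)\in D$), so the argument has a legitimate starting point. Assume, contrary to the assertion, that $\RE p$ fails to be positive throughout $\de$.

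First I would pass to the M\"obius transform $w(z)=\frac{1-p(z)}{1+p(z)}$, which carries the right half-plane onto $\de$, so that $\RE p(z)>0$ is equivalent to $|w(z)|<1$, and note $w(0)=0$. Wherever $\RE p>0$ we have $p\neq-1$, so $w$ is analytic there. If $\RE p$ is not everywhere positive, then by continuity and $w(0)=0$ there is a smallest $r_0\in(0,1)$ and a point $z_0$ with $|z_0|=r_0$ at which $|w(z_0)|=1$ while $|w(z)|<1$ for $|z|<r_0$; in particular $\RE p(z_0)=0$, so $p(z_0)=iu_2$ is purely imaginary.

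The key step is to apply Jack's lemma at $z_0$: since $|w|\le 1$ on the circle $|z|=r_0$ with equality at $z_0$, one obtains $z_0 w'(z_0)=k\,w(z_0)$ for some real $k\ge 1$. Differentiating $w=(1-p)/(1+p)$ gives $zw'=-2zp'/(1+p)^2$, and inserting the relation from Jack's lemma yields $z_0 p'(z_0)=-\frac{k}{2}\big(1-p(z_0)^2\big)$. Because $p(z_0)=iu_2$, we have $1-p(z_0)^2=1+u_2^2$, hence $z_0 p'(z_0)=-\frac{k}{2}(1+u_2^2)=:v_1$ is real and, as $k\ge 1$, satisfies $v_1\le-\frac{1+u_2^2}{2}$.

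Finally I would feed this into hypothesis (iii). Since $(p(z),zp'(z))\in D$ for every $z$, the pair $(iu_2,v_1)$ lies in $D$ with $v_1\le-\frac{1+u_2^2}{2}$, so $\RE\Psi(iu_2,v_1)\le 0$ by (iii). On the other hand, the standing assumption evaluated at $z_0$ gives $\RE\Psi(p(z_0),z_0 p'(z_0))=\RE\Psi(iu_2,v_1)>0$, a contradiction; the continuity of $\Psi$ in (i) is what makes both evaluations legitimate. Therefore $\RE p(z)>0$ throughout $\de$. The main technical obstacle is justifying the first touching point together with the analyticity of $w$ up to and including the closed disc $\{|z|\le r_0\}$ (which holds because $\RE p\ge 0$ there forces $p\neq-1$), so that Jack's lemma may be invoked cleanly; if one prefers to avoid Jack's lemma, the same estimate $v_1\le-\frac{1+u_2^2}{2}$ can be produced directly by a minimum-principle argument for $\RE p$ along the radius through $z_0$.
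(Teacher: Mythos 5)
The paper does not actually prove this lemma: it is quoted verbatim from Miller \cite{Miller} as a preliminary result, so there is no in-paper argument to compare yours against. Judged on its own, your proof is correct and is essentially the standard proof of the Miller--Mocanu admissibility lemma: assume $\RE p$ is not everywhere positive, locate a first touching point $z_0$ of $w=(1-p)/(1+p)$ with the unit circle (where $\RE p(z_0)=0$, so $p(z_0)=iu_2$), apply Jack's (Clunie--Jack) lemma to get $z_0w'(z_0)=kw(z_0)$ with $k\ge 1$, translate back via $zw'=-2zp'/(1+p)^2$ to obtain $z_0p'(z_0)=-\tfrac{k}{2}(1+u_2^2)\le-\tfrac12(1+u_2^2)$, and then contradict hypothesis (iii) against the standing inequality $\RE\Psi(p(z_0),z_0p'(z_0))>0$. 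The delicate points --- that $p\ne-1$ on $\{|z|\le r_0\}$ so that $w$ is analytic on a neighbourhood of that closed disc, and that $z_0p'(z_0)$ is real so that (iii) is applicable --- are both handled. The one shaky spot is the closing aside: the inequality $v_1\le-\tfrac12(1+u_2^2)$ cannot be extracted from a bare minimum principle for $\RE p$ along a radius, since the specific constant $\tfrac12(1+u_2^2)$ genuinely requires the boundary derivative estimate packaged in Jack's lemma (i.e., the Schwarz lemma applied to $w$); but as that remark is offered only as an optional alternative, it does not affect the validity of the main argument.
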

\begin{lem}\label{lem2}\cite{Ponussamy} If $p$ is  in $\pe$, and if $\zeta$ is a complex number satisfying $\RE(\zeta)\geq0,\ \zeta\neq0$, then $\RE\{p(z)+\zeta zp'(z)\}>\rho\ \ (0\leq\rho<1)$ implies that
$$\RE p(z)>\rho+(1-\rho)(2\iota_1-1),$$
where
$$\iota_1=\int_0^1\big(1+t^{\RE(\zeta)}\big)^{-1}dt,$$
and where $\iota_1$ is an increasing function of  $\RE(\zeta)$ and $1/2\leq\iota_1<1$. This estimate cannot be improved in general.
\end{lem}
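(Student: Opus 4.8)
The plan is to convert the differential condition into an integral (averaging) representation of $p$ in terms of $P(z):=p(z)+\zeta z p'(z)$ and then invoke the sharp lower estimate for the real part of functions in $\pe$. First I would solve the first-order linear differential equation $\zeta z p'(z)+p(z)=P(z)$ using the integrating factor $z^{1/\zeta}$; since $\RE(1/\zeta)=\RE(\zeta)/|\zeta|^2>0$ when $\RE(\zeta)>0$, the boundary term at the origin vanishes and one obtains
\begin{equation*}
p(z)=\frac{1}{\zeta}\int_0^1 s^{(1/\zeta)-1}P(sz)\,ds,\qquad \frac{1}{\zeta}\int_0^1 s^{(1/\zeta)-1}\,ds=1 .
\end{equation*}
Writing $Q:=(P-\rho)/(1-\rho)$, the hypothesis $\RE P>\rho$ gives $Q\in\pe$, and the normalization above yields $p(z)=\rho+(1-\rho)\,\zeta^{-1}\int_0^1 s^{(1/\zeta)-1}Q(sz)\,ds$.

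Next, for real $\zeta=\RE(\zeta)=:c>0$ the substitution $u=s^{1/c}$ turns the kernel into the unit-mass measure $du$ on $[0,1]$, giving $p(z)=\rho+(1-\rho)\int_0^1 Q(u^c z)\,du$. I would then apply the classical sharp bound $\RE Q(w)\ge(1-|w|)/(1+|w|)$ valid for every $Q\in\pe$ (a consequence of $Q\prec(1+w)/(1-w)$), together with the monotonicity of $x\mapsto(1-x)/(1+x)$ and $|u^c z|<u^c$, to get
\begin{equation*}
\RE p(z)>\rho+(1-\rho)\int_0^1\frac{1-u^c}{1+u^c}\,du=\rho+(1-\rho)(2\iota_1-1),
\end{equation*}
the last equality being the elementary identity $\int_0^1(1-u^c)/(1+u^c)\,du=2\int_0^1(1+u^c)^{-1}\,du-1$. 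The stated properties of $\iota_1$ follow by differentiating under the integral sign: $\partial_c(1+t^c)^{-1}=-t^c(\ln t)(1+t^c)^{-2}\ge0$ on $(0,1)$ shows $\iota_1$ is increasing in $c$, while the endpoint values $c=0$ and $c\to\infty$ give the range $1/2\le\iota_1<1$. Sharpness (``cannot be improved'') comes from testing $Q(w)=(1-w)/(1+w)$ and letting $z\to1^-$ along the real axis.

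The main obstacle is the genuinely complex parameter $\zeta$, where the kernel $\zeta^{-1}s^{(1/\zeta)-1}$ is complex-valued and the clean substitution $u=s^{1/\zeta}$ leaves the real line, so one cannot simply push $\RE$ through the weight. To handle this I would insert the Herglotz representation $Q(w)=\int_{|x|=1}(1+\bar x w)(1-\bar x w)^{-1}\,d\mu(x)$ with $\mu$ a probability measure, interchange the order of integration, and reduce the problem to bounding $\RE[\zeta^{-1}\int_0^1 s^{(1/\zeta)-1}(1+\eta s)(1-\eta s)^{-1}\,ds]$ from below for each $|\eta|<1$. After a rotation this becomes a one-parameter extremal problem whose worst case is again $Q(w)=(1-w)/(1+w)$ and whose value depends on $\zeta$ only through $\RE(\zeta)$, recovering the constant $2\iota_1-1$; verifying that this reduction genuinely isolates $\RE(\zeta)$ is the delicate technical point. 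Finally, the boundary case $\RE(\zeta)=0$ (where the representation integral diverges and $\iota_1=1/2$) collapses to the trivial bound $\RE p>\rho$ and is obtained by a continuity argument in $\zeta$.
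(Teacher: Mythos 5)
First, a point of comparison: the paper offers no proof of this lemma at all --- it is imported verbatim from Ponnusamy's work as a preliminary result --- so there is no in-paper argument to measure your proposal against. On its own merits, your strategy (solve $\zeta zp'(z)+p(z)=P(z)$ with the integrating factor $z^{1/\zeta}$ to get $p(z)=\zeta^{-1}\int_0^1 s^{1/\zeta-1}P(sz)\,ds$, normalize to $Q=(P-\rho)/(1-\rho)\in\pe$, and feed the sharp bound $\RE Q(w)\geq (1-|w|)/(1+|w|)$ through the kernel) is the standard one, and for \emph{real} $\zeta>0$ your argument is complete and correct, including the evaluation of the constant, the monotonicity of $\iota_1$ in $\RE(\zeta)$, the range $1/2\leq\iota_1<1$, and the extremal example showing sharpness.

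The gap is that the lemma is stated for complex $\zeta$, and that is precisely the case you do not prove. After the Herglotz reduction, everything hinges on the inequality $\RE\bigl[\zeta^{-1}\int_0^1 s^{1/\zeta-1}(1-\eta s)^{-1}\,ds\bigr]\geq\int_0^1(1+u^{\RE(\zeta)})^{-1}\,du$ for all $|\eta|\leq 1$, equivalently a lower bound for $\RE\sum_{n\geq 0}z^n/(1+n\zeta)$; you assert that ``after a rotation'' the worst case is again the half-plane extremal and that the answer depends on $\zeta$ only through $\RE(\zeta)$, but no rotation makes the complex weight $\zeta^{-1}s^{1/\zeta-1}$ real, and your stronger claim (that the extremal \emph{value} depends only on $\RE(\zeta)$) is not what the lemma asserts and is not obviously true --- the lemma only claims a lower bound that is unimprovable ``in general.'' This estimate is the entire content of the complex case and cannot be left as ``the delicate technical point.'' Separately, your fallback for $\RE(\zeta)=0$ via ``continuity in $\zeta$'' does not work: the hypothesis $\RE\{p(z)+\zeta zp'(z)\}>\rho$ is given only for the fixed purely imaginary $\zeta$, and replacing $\zeta$ by $\zeta+\epsilon$ perturbs the hypothesis by $\epsilon\RE\{zp'(z)\}$, which you do not control. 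That boundary case is better handled directly: apply the paper's Lemma \ref{lem1} to $\tilde p=(p-\rho)/(1-\rho)$ with $\Psi(u,v)=u+\zeta v$, for which $\RE\Psi(iu_2,v_1)=v_1\RE(\zeta)=0\leq 0$, giving $\RE p(z)>\rho$, which is exactly the asserted bound since $\iota_1=1/2$ there.
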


\section{Inclusion Properties}
In this section, we examine some inclusion properties for the class $\mathcal{M}^{m,\gamma}_{\lambda,\alpha,\beta}(k,\vartheta,\rho)$.
\begin{thm}\label{teo1} Let $\gamma>0$ and  $f\in \mathcal{M}^{m,\gamma}_{\lambda,\alpha,\beta}(k,\vartheta,\rho)$. Then
\begin{equation}\label{eq: vareta}
\bigg(\frac{\mathcal{E}^{m}_{\lambda,\alpha,\beta}f(z)}{z}\bigg)^\vartheta\in\pe_k(\rho_1),
\end{equation}
where $\rho_1$ is given by
\begin{equation}\label{eq:rho1}
\rho_1=\frac{2\vartheta\rho+\lambda\gamma}{2\vartheta+\lambda\gamma}.
\end{equation}
\end{thm}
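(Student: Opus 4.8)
The plan is to collapse the two-term bracket in \eqref{eq:def1} into a single first-order differential expression in the quantity of interest. Set
$$H(z)=\bigg(\frac{\mathcal{E}^{m}_{\lambda,\alpha,\beta}f(z)}{z}\bigg)^{\vartheta},$$
an analytic function in $\de$ with $H(0)=1$. Logarithmic differentiation gives $\tfrac{1}{\vartheta}zH'(z)=(\mathcal{E}^{m}_{\lambda,\alpha,\beta}f(z))'\big(\mathcal{E}^{m}_{\lambda,\alpha,\beta}f(z)/z\big)^{\vartheta-1}-H(z)$. Using the recurrence \eqref{eq:recurrence} to write $\mathcal{E}^{m+1}_{\lambda,\alpha,\beta}f/z=(1-\lambda)\mathcal{E}^{m}_{\lambda,\alpha,\beta}f/z+\lambda(\mathcal{E}^{m}_{\lambda,\alpha,\beta}f)'$ and substituting, I expect the left-hand side of \eqref{eq:def1} to reduce to exactly
$$H(z)+\frac{\lambda\gamma}{\vartheta}\,zH'(z).$$
Thus the hypothesis $f\in\mathcal{M}^{m,\gamma}_{\lambda,\alpha,\beta}(k,\vartheta,\rho)$ becomes equivalent to $H+\tfrac{\lambda\gamma}{\vartheta}zH'\in\pe_{k}(\rho)$, and the theorem reduces to a statement purely about this expression.

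Next I would pass to the $\pe_{k}$-decomposition. By \eqref{e1.8} there are $p_{1},p_{2}\in\pe(\rho)$ with $H+\tfrac{\lambda\gamma}{\vartheta}zH'=(\tfrac{k}{4}+\tfrac12)p_{1}-(\tfrac{k}{4}-\tfrac12)p_{2}$. For each $j$ the linear differential equation $h_{j}+\tfrac{\lambda\gamma}{\vartheta}zh_{j}'=p_{j}$ with $h_{j}(0)=1$ has a unique analytic solution, since comparing Taylor coefficients the multiplier $1+n\lambda\gamma/\vartheta$ never vanishes when $\lambda\gamma/\vartheta\ge0$. By linearity and this uniqueness, $H=(\tfrac{k}{4}+\tfrac12)h_{1}-(\tfrac{k}{4}-\tfrac12)h_{2}$, so by the converse direction of \eqref{e1.8} it suffices to prove $\RE h_{j}(z)>\rho_{1}$ in $\de$ for $j=1,2$.

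To obtain the sharp value $\rho_{1}$ I would apply Lemma \ref{lem1}. Fix $j$ and put $\phi=(h_{j}-\rho_{1})/(1-\rho_{1})$, so that $\phi(0)=1$ and $\RE h_{j}>\rho_{1}$ is equivalent to $\RE\phi>0$. Substituting $h_{j}=(1-\rho_{1})\phi+\rho_{1}$ into $\RE(h_{j}+\tfrac{\lambda\gamma}{\vartheta}zh_{j}')>\rho$ recasts the hypothesis as $\RE\Psi(\phi(z),z\phi'(z))>0$, where $\Psi(u,v)=(1-\rho_{1})u+\tfrac{\lambda\gamma}{\vartheta}(1-\rho_{1})v-(\rho-\rho_{1})$. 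Conditions (i) and (ii) of Lemma \ref{lem1} hold at once, as $\Psi$ is continuous on $\ce^{2}$ and $\RE\Psi(1,0)=1-\rho>0$. The decisive point is condition (iii): for $v_{1}\le-\tfrac{1+u_{2}^{2}}{2}\le-\tfrac12$ and $\lambda\gamma/\vartheta\ge0$ one computes $\RE\Psi(iu_{2},v_{1})=\tfrac{\lambda\gamma}{\vartheta}(1-\rho_{1})v_{1}-(\rho-\rho_{1})\le-\tfrac{\lambda\gamma}{2\vartheta}(1-\rho_{1})-(\rho-\rho_{1})$, and demanding that the last quantity be $\le0$ forces precisely $\rho_{1}(2\vartheta+\lambda\gamma)=2\vartheta\rho+\lambda\gamma$, i.e.\ the value \eqref{eq:rho1}. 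Lemma \ref{lem1} then gives $\RE\phi>0$, hence $\RE h_{j}>\rho_{1}$, which finishes the proof. The \emph{main obstacle} I anticipate is the bookkeeping of the first step, namely confirming that the recurrence together with logarithmic differentiation genuinely collapses the bracket in \eqref{eq:def1} to $H+\tfrac{\lambda\gamma}{\vartheta}zH'$; once that identity is in hand, the decomposition and the Miller-type lemma proceed along standard lines.
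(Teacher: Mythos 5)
Your proof is correct, and its core is the paper's own: Miller's Lemma~\ref{lem1} applied to the functional $\Psi(u,v)=(1-\rho_1)u+\rho_1-\rho+\tfrac{\lambda\gamma(1-\rho_1)}{\vartheta}v$, with condition (iii) forcing exactly the value \eqref{eq:rho1}; the reduction of \eqref{eq:def1} to $H+\tfrac{\lambda\gamma}{\vartheta}zH'$ via \eqref{eq:recurrence} and logarithmic differentiation also matches (and does check out, so the ``main obstacle'' you flag is not one). Where you genuinely diverge is in how the $\pe_k$-decomposition is threaded through. The paper writes $H=(1-\rho_1)p+\rho_1$ at the outset, decomposes $p$ as in \eqref{e1.8} into components $p_1,p_2$ \emph{before} anything is known about their class, and then asserts that, because $f\in\mathcal{M}^{m,\gamma}_{\lambda,\alpha,\beta}(k,\vartheta,\rho)$, each of the two resulting differential expressions separately lies in $\pe(\rho)$ --- a step that is customary in this literature but is not self-evident, since membership of a $(\tfrac k4+\tfrac12,\,\tfrac k4-\tfrac12)$-combination in $\pe_k(\rho)$ does not by itself locate the individual summands. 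You instead decompose the quantity actually hypothesized to lie in $\pe_k(\rho)$, namely $H+\tfrac{\lambda\gamma}{\vartheta}zH'$, into $p_1,p_2\in\pe(\rho)$ by \eqref{e1.8}, solve the linear equation $h_j+\tfrac{\lambda\gamma}{\vartheta}zh_j'=p_j$ uniquely (the coefficient multipliers $1+n\lambda\gamma/\vartheta$ never vanish for $\lambda\gamma/\vartheta\ge 0$), and recover $H=(\tfrac k4+\tfrac12)h_1-(\tfrac k4-\tfrac12)h_2$ by linearity and uniqueness, so that the component-wise hypothesis needed for Lemma~\ref{lem1} holds by construction. This buys a fully justified version of the step the paper glosses over, at the cost of one elementary uniqueness observation; from that point on the two arguments coincide.
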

\begin{proof} Let
\begin{equation}\label{eq:foverg}
\bigg(\frac{\mathcal{E}^{m}_{\lambda,\alpha,\beta}f(z)}{z}\bigg)^\vartheta=(1-\rho_1)p(z)+\rho_1,
\end{equation}
where $p(0)=1$ and
$$p(z)=\left(\frac{k}{4}+\frac{1}{2}\right)p_{1}(z)-\left(\frac{k}{4}-\frac{1}{2}\right)p_{2}(z).$$
Thus,  we obtain
\begin{align}\label{align}	
& \bigg\{(1-\gamma)\bigg(\frac{\mathcal{E}^{m}_{\lambda,\alpha,\beta}f(z)}{z}\bigg)^\vartheta+\gamma\bigg(\frac{\mathcal{E}^{m+1}_{\lambda,\alpha,\beta}f(z)}{z}\bigg)\bigg(\frac{\mathcal{E}^{m}_{\lambda,\alpha,\beta}f(z)}{z}\bigg)^{\vartheta-1}\bigg\}\nonumber\\
& =(1-\gamma)\big((1-\rho_1)p(z)+\rho_1\big)+\gamma\bigg[\frac{\mathcal{E}^{m+1}_{\lambda,\alpha,\beta}f(z)}{\mathcal{E}^{m}_{\lambda,\alpha,\beta}f(z)}\big((1-\rho_1)p(z)+\rho_1\big)\bigg].
\end{align}
Taking logarithmic differentiation of \eqref{eq:foverg}, we get
$$\vartheta\bigg[\frac{z(\mathcal{E}^{m}_{\lambda,\alpha,\beta}f(z))'}{\mathcal{E}^{m}_{\lambda,\alpha,\beta}f(z)}-1\bigg]=\frac{(1-\rho_1)zp'(z)}{(1-\rho_1)p(z)+\rho_1}.$$
By using the identity  \eqref{eq:recurrence} in the last expression, we obtain
\begin{equation}\label{eq:egamma1}
\frac{\mathcal{E}^{m+1}_{\lambda,\alpha,\beta}f(z)}{\mathcal{E}^{m}_{\lambda,\alpha,\beta}f(z)}=\frac{\lambda(1-\rho_1)zp'(z)}{\vartheta\big((1-\rho_1)p(z)+\rho_1\big)}+1.
 \end{equation}
Substituting \eqref{eq:egamma1} into \eqref{align} and using \eqref{e1.8}, we arrive at
\begin{align*}	
&(1-\rho_1)p(z)+\rho_1+\frac{\lambda\gamma(1-\rho_1)zp'(z)}{\vartheta}\nonumber\\
&\ \ =\left(\frac{k}{4}+\frac{1}{2}\right)\bigg\{(1-\rho_1)p_1(z)+\rho_1+\frac{\lambda\gamma(1-\rho_1)zp_1'(z)}{\vartheta}\bigg\}\nonumber\\
&\  \ -\left(\frac{k}{4}-\frac{1}{2}\right)\bigg\{(1-\rho_1)p_2(z)+\rho_1+\frac{\lambda\gamma(1-\rho_1)zp_2'(z)}{\vartheta}\bigg\}.
\end{align*}
Since $f\in \mathcal{M}^{m,\gamma}_{\lambda,\alpha,\beta}(k,\vartheta,\rho)$, it follows that
$$\bigg\{(1-\rho_1)p_i(z)+\rho_1+\frac{\lambda\gamma(1-\rho_1)zp_i'(z)}{\vartheta}\bigg\}\in\pe(\rho), \ \ (0\leq\rho<1;  i=1,2).$$
That is
\begin{equation}\label{align2}
\frac{1}{1-\rho}\bigg\{(1-\rho_1)p_i(z)+\rho_1+\frac{\lambda\gamma(1-\rho_1)zp_i'(z)}{\vartheta}-\rho\bigg\}\in\pe.
\end{equation}
To prove the theorem, we will show that $p_i\in\pe, \ (i=1,2)$. We form the functional $\Psi(u,v)$ by taking $u=u_1+iu_2$ and $v=v_1+iv_2$ such that
$$\Psi(u,v)=(1-\rho_1)u+\rho_1-\rho+\frac{\lambda\gamma(1-\rho_1)v}{\vartheta}.$$
Using \eqref{align2}, it is easy to  see that the first two conditions of Lemma \ref{lem1} are satisfied. To verify the condition (iii),  we get
\begin{align}\label{align3}
\RE\Psi(iu_2,v_1)&=\rho_1-\rho+\RE\bigg\{\frac{\lambda\gamma(1-\rho_1)v_1}{\vartheta}\bigg\}\nonumber\\
&\leq\rho_1-\rho-\frac{\lambda\gamma(1-\rho_1)(1+u_2^2)}{2\vartheta}\nonumber\\
&=\frac{2\vartheta\rho_1-2\vartheta\rho-\lambda\gamma(1-\rho_1)(1+u_2^2)}{2\vartheta}\nonumber\\
&=\frac{2\vartheta(\rho_1-\rho)-\lambda\gamma(1-\rho_1)-\lambda\gamma(1-\rho_1)u^2_2}{2\vartheta}:=\frac{A+Bu^2_2}{2C},\nonumber
\end{align}
where $v_1\leq-\frac{1+u_2^2}{2}$. Now, $\RE\Psi(iu_2,v_1)\leq 0$ if
\begin{align}
& A=2\vartheta(\rho_1-\rho)-\lambda\gamma(1-\rho_1)\leq0,\nonumber\\
& B=-\lambda\gamma(1-\rho_1)\leq0,\nonumber\\
& C=\vartheta>0.\nonumber
\end{align}
 From $A\leq0$, we obtain $\rho_1$ as given by \eqref{eq:rho1} and $B\leq0$ ensures that $0\leq\rho_1<1$.  In view of  Lemma \ref{lem1}, for $p(z)=p_i(z)$, $p_i\in\pe$, where $i=1, 2$. Consequently, $p\in\pe_k(\rho_1)$.
\end{proof}
For $m=0$, $\lambda=1$, $\alpha=0$ and  $\beta=1$, Theorem \ref{teo1} reduces to the  following  new result.
\begin{cor} Let $\gamma>0$ and  $f\in \mathcal{M}^{\gamma}(k,\vartheta,\rho)$. Then
$\big(\frac{f(z)}{z}\big)^\vartheta\in\pe_k(\rho_1)$, where $\rho_1$ is given by
$$
\rho_1=\frac{2\vartheta\rho+\gamma}{2\vartheta+\gamma}.
$$
\end{cor}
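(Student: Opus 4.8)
The plan is to derive the Corollary as an immediate specialization of Theorem \ref{teo1}. Observe that both the class $\mathcal{M}^{\gamma}(k,\vartheta,\rho)$ and the exponent $\rho_1$ appearing in the Corollary are exactly what Theorem \ref{teo1} produces upon setting $m=0$, $\lambda=1$, $\alpha=0$ and $\beta=1$, so no new analytic machinery is needed.

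First I would check that under these parameter values the Mittag-Leffler operator collapses to the identity on $\mathcal{A}$. Substituting $m=0$, $\lambda=1$, $\alpha=0$, $\beta=1$ into \eqref{eq:sri-tom}, the coefficient multiplying $a_{n}z^{n}$ becomes $\frac{\Gamma(1)(1+(n-1))^{0}}{\Gamma(0\cdot(n-1)+1)}=1$ for each $n\geq 2$, so that $\mathcal{E}^{0}_{1,0,1}f(z)=f(z)$. Hence $\big(\mathcal{E}^{0}_{1,0,1}f(z)/z\big)^{\vartheta}=\big(f(z)/z\big)^{\vartheta}$, and the defining condition \eqref{eq:def1} of $\mathcal{M}^{0,\gamma}_{1,0,1}(k,\vartheta,\rho)$ becomes precisely the membership condition characterizing $\mathcal{M}^{\gamma}(k,\vartheta,\rho)$ recorded in the example preceding the theorem. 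This confirms that the hypothesis $f\in\mathcal{M}^{\gamma}(k,\vartheta,\rho)$ is genuinely the specialized hypothesis of Theorem \ref{teo1}, with the requirement $\gamma>0$ carrying over unchanged.

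It then remains only to specialize the conclusion. With $\lambda=1$ the membership \eqref{eq: vareta} reads $\big(f(z)/z\big)^{\vartheta}\in\pe_{k}(\rho_1)$, while the formula \eqref{eq:rho1} reduces to $\rho_1=\frac{2\vartheta\rho+\gamma}{2\vartheta+\gamma}$, which is exactly the assertion of the Corollary. I do not anticipate any genuine obstacle here: all of the substantive work---the construction of the functional $\Psi(u,v)$ and the application of Lemma \ref{lem1} to force $\RE p_i(z)>0$---has already been discharged in the proof of Theorem \ref{teo1}. The only point requiring care is verifying that the operator truly reduces to the identity map under the chosen parameters, so that the specialized hypothesis and the specialized value of $\rho_1$ match the statement verbatim.
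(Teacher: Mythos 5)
Your proposal is correct and matches the paper's treatment exactly: the corollary is obtained by specializing Theorem \ref{teo1} to $m=0$, $\lambda=1$, $\alpha=0$, $\beta=1$, under which $\mathcal{E}^{0}_{1,0,1}f=f$ (and, via \eqref{eq:recurrence}, $\mathcal{E}^{1}_{1,0,1}f(z)=zf'(z)$), so the hypothesis becomes membership in $\mathcal{M}^{\gamma}(k,\vartheta,\rho)$ and \eqref{eq:rho1} becomes $\rho_1=\frac{2\vartheta\rho+\gamma}{2\vartheta+\gamma}$. The paper offers no further argument, so your verification of the operator reduction is, if anything, slightly more explicit than the source.
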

By using the inclusion relation given in Theorem \ref{teo1}, we prove the following theorem.
\begin{thm}\label{teo2} Let \ $\vartheta>0$ and $0\leq \gamma_1<\gamma_2$. Then $\mathcal{M}^{m,\gamma_2}_{\lambda,\alpha,\beta}(k,\vartheta,\rho)\subset\mathcal{M}^{m,\gamma_1}_{\lambda,\alpha,\beta}(k,\vartheta,\rho).$
\end{thm}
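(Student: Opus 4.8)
The plan is to exploit the fact that the defining expression \eqref{eq:def1} is affine in the parameter $\gamma$, together with the convexity of the family $\pe_k(\rho)$. For a fixed $f\in\mathcal{A}$ write
$$G(z)=\bigg(\frac{\mathcal{E}^{m}_{\lambda,\alpha,\beta}f(z)}{z}\bigg)^\vartheta,\qquad H(z)=\bigg(\frac{\mathcal{E}^{m+1}_{\lambda,\alpha,\beta}f(z)}{z}\bigg)\bigg(\frac{\mathcal{E}^{m}_{\lambda,\alpha,\beta}f(z)}{z}\bigg)^{\vartheta-1},$$
so that $f\in\mathcal{M}^{m,\gamma}_{\lambda,\alpha,\beta}(k,\vartheta,\rho)$ is exactly the statement $(1-\gamma)G+\gamma H\in\pe_k(\rho)$. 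Setting $\Phi_\gamma:=(1-\gamma)G+\gamma H$, a direct computation gives the identity
$$\Phi_{\gamma_1}=\frac{\gamma_1}{\gamma_2}\,\Phi_{\gamma_2}+\Big(1-\frac{\gamma_1}{\gamma_2}\Big)G,$$
valid for every $\gamma_2>0$; since $0\leq\gamma_1<\gamma_2$, the weight $\gamma_1/\gamma_2$ lies in $[0,1)$, so $\Phi_{\gamma_1}$ is a genuine convex combination of $\Phi_{\gamma_2}$ and $G$.

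First I would exhibit the two functions entering this combination as members of $\pe_k(\rho)$. Because $f\in\mathcal{M}^{m,\gamma_2}_{\lambda,\alpha,\beta}(k,\vartheta,\rho)$, the term $\Phi_{\gamma_2}\in\pe_k(\rho)$ by definition. For $G$, since $\gamma_2>0$ I would invoke Theorem \ref{teo1}, which yields $G\in\pe_k(\rho_1)$ with $\rho_1=(2\vartheta\rho+\lambda\gamma_2)/(2\vartheta+\lambda\gamma_2)$; a short check gives $\rho_1-\rho=\lambda\gamma_2(1-\rho)/(2\vartheta+\lambda\gamma_2)\geq0$, whence $\rho_1\geq\rho$ and therefore $G\in\pe_k(\rho_1)\subset\pe_k(\rho)$.

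The decisive step is then the convexity of $\pe_k(\rho)$. Using the decomposition \eqref{e1.8}, each member of $\pe_k(\rho)$ is written as $\big(\tfrac{k}{4}+\tfrac12\big)p_1-\big(\tfrac{k}{4}-\tfrac12\big)p_2$ with $p_1,p_2\in\pe(\rho)$; as $\pe(\rho)$ is itself convex (a convex combination of functions whose real part exceeds $\rho$ again has real part exceeding $\rho$, and the value $1$ at the origin is preserved), forming the same convex combination of the two representations shows that $\pe_k(\rho)$ is convex. Applying this with weight $\gamma_1/\gamma_2$ to $\Phi_{\gamma_2},G\in\pe_k(\rho)$ gives $\Phi_{\gamma_1}\in\pe_k(\rho)$, that is, $f\in\mathcal{M}^{m,\gamma_1}_{\lambda,\alpha,\beta}(k,\vartheta,\rho)$, which is the asserted inclusion.

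I do not anticipate a serious obstacle: the argument rests entirely on spotting the affine-in-$\gamma$ structure and on the convexity of $\pe_k(\rho)$. The only point deserving care is the boundary case $\gamma_1=0$, where $\Phi_{\gamma_1}=G$ and the claim collapses precisely to the inclusion $G\in\pe_k(\rho)$ furnished by Theorem \ref{teo1}.
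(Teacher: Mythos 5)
Your argument is correct and is essentially the paper's own proof: the same convex-combination identity $\Phi_{\gamma_1}=(1-\gamma_1/\gamma_2)G+(\gamma_1/\gamma_2)\Phi_{\gamma_2}$, the same appeal to Theorem \ref{teo1} to place $G$ in $\pe_k(\rho_1)\subset\pe_k(\rho)$, and the same use of the convexity of $\pe_k(\rho)$ (which the paper simply cites, while you also verify $\rho_1\geq\rho$ and sketch the convexity). No substantive difference.
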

\begin{proof} Let $f\in\mathcal{M}^{m,\gamma_2}_{\lambda,\alpha,\beta}(k,\vartheta,\rho)$. Then, we have
$$H_2(z)=\bigg\{(1-\gamma_2)\bigg(\frac{\mathcal{E}^{m}_{\lambda,\alpha,\beta}f(z)}{z}\bigg)^\vartheta+\gamma_2\bigg(\frac{\mathcal{E}^{m+1}_{\lambda,\alpha,\beta}f(z)}{z}\bigg)\bigg(\frac{\mathcal{E}^{m}_{\lambda,\alpha,\beta}f(z)}{z}\bigg)^{\vartheta-1}\bigg\}\in\pe_{k}(\rho).$$
In view of Theorem \ref{teo1}, we conclude that
$$\bigg(\frac{\mathcal{E}^{m}_{\lambda,\alpha,\beta}f(z)}{z}\bigg)^\vartheta:=H_1(z)\in\pe_k(\rho_1)\subset\pe_k(\rho).$$	
Thus, for $\gamma_1\geq0$,
\begin{equation}\label{convex}
\begin{split}	\bigg\{(1-\gamma_1)\bigg(\frac{\mathcal{E}^{m}_{\lambda,\alpha,\beta}f(z)}{z}\bigg)^\vartheta+\gamma_1\bigg(\frac{\mathcal{E}^{m+1}_{\lambda,\alpha,\beta}f(z)}{z}\bigg)\bigg(\frac{\mathcal{E}^{m}_{\lambda,\alpha,\beta}f(z)}{z}\bigg)^{\vartheta-1}\bigg\}\\
  =\bigg(1-\frac{\gamma_1}{\gamma_2}\bigg)H_1(z)+\frac{\gamma_1}{\gamma_2}H_2(z).
 \end{split}
\end{equation}
Because the class $\pe_k(\rho)$ is a convex set (see \cite{Noor}), it follows that the right sides of \eqref{convex} belongs to $\pe_k(\rho)$, therefore $f\in\mathcal{M}^{m,\gamma_1}_{\lambda,\alpha,\beta}(k,\vartheta,\rho)$.
\end{proof}
For $m=0$, $\lambda=1$, $\alpha=0$, and  $\beta=1$, Theorem \ref{teo2} reduces to the  following  inclusion result.
\begin{rem}\cite{Noor} Let $\vartheta>0$ and $0\leq \gamma_1<\gamma_2$. Then $\mathcal{M}^{\gamma_2}(k,\vartheta,\rho)\subset\mathcal{M}^{\gamma_1}(k,\vartheta,\rho).$
\end{rem}

\section{Radius Problem}
In this section, we examine certain radius problems.
\begin{thm}\label{teo3} If a  function $f\in\mathcal{A}$ satisfies
\begin{equation}\label{eq:radii}
\bigg(\frac{\mathcal{E}^{m}_{\lambda,\alpha,\beta}f(z)}{z}\bigg)^\vartheta\in\pe_k(\rho),
\end{equation}
then $f\in\mathcal{M}^{m,\gamma}_{\lambda,\alpha,\beta}(k,\vartheta,\rho)$ for $|z|< r_1$, where
\begin{equation}\label{eq:radius}
r_1=\frac{\lambda\gamma+\vartheta-\sqrt{\lambda^2\gamma^2+2\lambda\gamma\vartheta}}{\vartheta}.
\end{equation}
\end{thm}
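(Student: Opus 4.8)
The plan is to reduce the membership condition defining $\mathcal{M}^{m,\gamma}_{\lambda,\alpha,\beta}(k,\vartheta,\rho)$ to a statement about a single function of positive real part, and then to run a radius estimate on its Carath\'eodory components. Write $F(z)=\bigl(\mathcal{E}^{m}_{\lambda,\alpha,\beta}f(z)/z\bigr)^{\vartheta}$, which by hypothesis \eqref{eq:radii} lies in $\pe_{k}(\rho)$. The first and decisive step is to collapse the two-term bracket in \eqref{eq:def1} into a single expression in $F$. Logarithmic differentiation of $F$ gives $\tfrac{zF'(z)}{F(z)}=\vartheta\bigl(\tfrac{z(\mathcal{E}^{m}_{\lambda,\alpha,\beta}f(z))'}{\mathcal{E}^{m}_{\lambda,\alpha,\beta}f(z)}-1\bigr)$, and feeding this into the recurrence \eqref{eq:recurrence} yields $\tfrac{\mathcal{E}^{m+1}_{\lambda,\alpha,\beta}f(z)}{\mathcal{E}^{m}_{\lambda,\alpha,\beta}f(z)}=1+\tfrac{\lambda}{\vartheta}\tfrac{zF'(z)}{F(z)}$. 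Since the bracket in \eqref{eq:def1} equals $F(z)\bigl[(1-\gamma)+\gamma\,\tfrac{\mathcal{E}^{m+1}_{\lambda,\alpha,\beta}f(z)}{\mathcal{E}^{m}_{\lambda,\alpha,\beta}f(z)}\bigr]$, these two identities combine to show that the left-hand side of \eqref{eq:def1} equals $F(z)+\tfrac{\lambda\gamma}{\vartheta}zF'(z)$. Thus the theorem is equivalent to showing $F(z)+\tfrac{\lambda\gamma}{\vartheta}zF'(z)\in\pe_{k}(\rho)$ for $|z|<r_1$.

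Next I would peel off the $\pe_k$ structure. By \eqref{e1.8}, $F=\bigl(\tfrac{k}{4}+\tfrac12\bigr)F_1-\bigl(\tfrac{k}{4}-\tfrac12\bigr)F_2$ with $F_1,F_2\in\pe(\rho)$. The map $F\mapsto F+\tfrac{\lambda\gamma}{\vartheta}zF'$ is linear, so $F+\tfrac{\lambda\gamma}{\vartheta}zF'$ inherits exactly the same decomposition, with components $F_i+\tfrac{\lambda\gamma}{\vartheta}zF_i'$. Appealing once more to \eqref{e1.8}, it therefore suffices to prove that each $F_i+\tfrac{\lambda\gamma}{\vartheta}zF_i'\in\pe(\rho)$ on $|z|<r_1$; that is, the problem is reduced to the single-function radius estimate $\RE\bigl(F_i(z)+\tfrac{\lambda\gamma}{\vartheta}zF_i'(z)\bigr)>\rho$ for $F_i\in\pe(\rho)$, $i=1,2$.

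For this estimate I would set $c=\lambda\gamma/\vartheta$ and write $F_i=\rho+(1-\rho)\phi_i$ with $\phi_i\in\pe$, so that $\RE\bigl(F_i+czF_i'\bigr)-\rho=(1-\rho)\RE\bigl(\phi_i+cz\phi_i'\bigr)$. Estimating $\RE(\phi_i+cz\phi_i')\ge \RE\phi_i-c\,|z\phi_i'|$ and inserting the classical bounds $\RE\phi_i(z)\ge\tfrac{1-r}{1+r}$ and $|z\phi_i'(z)|\le\tfrac{2r}{1-r^2}$ (with $|z|=r$) for the Carath\'eodory class gives the lower bound $\tfrac{(1-r)^2-2cr}{1-r^2}$, which is strictly positive precisely when $(1-r)^2> 2cr$, equivalently $\vartheta(1-r)^2> 2\lambda\gamma r$, i.e. $\vartheta r^2-2(\vartheta+\lambda\gamma)r+\vartheta>0$. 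The root of this quadratic lying in $(0,1)$ is $r_1=\tfrac{\lambda\gamma+\vartheta-\sqrt{\lambda^2\gamma^2+2\lambda\gamma\vartheta}}{\vartheta}$, which is exactly \eqref{eq:radius}; hence $\RE(\phi_i+cz\phi_i')>0$ for $|z|<r_1$ and the desired membership follows. I expect the main obstacle to be the first step, namely verifying that the recurrence together with logarithmic differentiation really does collapse the bracket to $F+\tfrac{\lambda\gamma}{\vartheta}zF'$; once this identity is secured, the splitting via \eqref{e1.8} and the quadratic radius computation are routine.
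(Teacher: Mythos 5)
Your overall route is the same as the paper's: collapse the bracket in \eqref{eq:def1} to $F(z)+\tfrac{\lambda\gamma}{\vartheta}zF'(z)$ via the recurrence \eqref{eq:recurrence}, peel off the $\pe_k$ structure through \eqref{e1.8}, and reduce to the positivity of $\RE\bigl(\phi_i(z)+cz\phi_i'(z)\bigr)$ for $\phi_i\in\pe$ with $c=\lambda\gamma/\vartheta$; the reduction, the decomposition, and the final quadratic $\vartheta(1-r)^2-2\lambda\gamma r>0$ all agree with the paper. The one step that is wrong as written is the inequality $|z\phi_i'(z)|\le\frac{2r}{1-r^2}$. This is not a valid Carath\'eodory estimate: for $\phi(z)=\frac{1+z}{1-z}$ one has $|z\phi'(z)|=\frac{2r}{(1-r)^2}$ at $z=r$, which exceeds $\frac{2r}{1-r^2}$ for every $r\in(0,1)$. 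The correct refined estimate --- and the one the paper actually invokes --- is $|z\phi'(z)|\le\frac{2r\,\RE\phi(z)}{1-r^2}$; the factor $\RE\phi(z)$ cannot be dropped, since $\RE\phi$ can be as large as $\frac{1+r}{1-r}$.

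The conclusion survives the error, because the correct estimate yields the stronger bound $\RE\bigl(\phi_i+cz\phi_i'\bigr)\ge\RE\phi_i\bigl(1-\frac{2cr}{1-r^2}\bigr)$, which is positive whenever $1-r^2>2cr$, i.e.\ for $r<\sqrt{c^2+1}-c$; since $r_1=1+c-\sqrt{c^2+2c}\le\sqrt{c^2+1}-c$ (the difference of the two sides reduces to the inequality $(c^2+1)(c^2+2c)\ge c^2(c+1)^2$, i.e.\ $2c\ge0$), positivity certainly holds for $|z|<r_1$ and the membership in $\pe_k(\rho)$ follows exactly as you argue. So the fix is simply to substitute the correct derivative estimate; everything else in your write-up (the identity $\frac{\mathcal{E}^{m+1}_{\lambda,\alpha,\beta}f}{\mathcal{E}^{m}_{\lambda,\alpha,\beta}f}=1+\frac{\lambda}{\vartheta}\frac{zF'}{F}$, the linearity of $F\mapsto F+czF'$, the reduction to the components $\phi_i$) is sound and mirrors the paper. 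For what it is worth, the paper's own display has a parallel blemish at exactly this point, silently replacing $1-r^2$ by $(1-r)^2$ in the denominator; with the estimate applied correctly, the argument actually gives the larger radius $\sqrt{c^2+1}-c$, so neither your derivation nor the paper's establishes that $r_1$ is best possible.
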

\begin{proof} In view of \eqref{eq:radii},  we have
\begin{equation}\label{eq:pk}
\bigg(\frac{\mathcal{E}^{m}_{\lambda,\alpha,\beta}f(z)}{z}\bigg)^\vartheta=(1-\rho)p(z)+\rho,
\end{equation}
where $p\in\pe_k$.

Hence, by using \eqref{eq:recurrence} and  \eqref{eq:pk}, we easily get
\begin{align}\label{eq:radius2}
&\frac{1}{1-\rho}\bigg\{(1-\gamma)\bigg(\frac{\mathcal{E}^{m}_{\lambda,\alpha,\beta}f(z)}{z}\bigg)^\vartheta+\gamma\bigg(\frac{\mathcal{E}^{m+1}_{\lambda,\alpha,\beta}f(z)}{z}\bigg)\bigg(\frac{\mathcal{E}^{m}_{\lambda,\alpha,\beta}f(z)}{z}\bigg)^{\vartheta-1}-\rho\bigg\}\nonumber\\	
&=p(z)+\frac{\lambda\gamma zp'(z)}{\vartheta}\nonumber\\
&=\left(\frac{k}{4}+\frac{1}{2}\right)\bigg\{p_1(z)+\frac{\lambda\gamma zp_1'(z)}{\vartheta}\bigg\} -\left(\frac{k}{4}-\frac{1}{2}\right)\bigg\{p_2(z)+\frac{\lambda\gamma zp_2'(z)}{\vartheta}\bigg\},
\end{align}	
where $p_1,p_2\in \pe$ and $z\in\de$.\\

Now, by using well-known estimates (see \cite{Goodman}) for the class $\pe$,
$$|zp_i'(z)|\leq \frac{2r\RE p_i(z)}{1-r^2},\quad\quad\quad\quad\quad\quad\quad\quad\quad\quad\quad\quad\quad\quad\quad\quad\quad\quad\quad\quad\quad$$
$$\RE p_i(z)\geq\frac{1-r}{1+r},\ \ (|z|<r<1; i=1,2; z\in\de),\quad\quad\quad\quad\quad\quad\quad\quad\quad\quad$$	
we have
\begin{align}	
\RE\bigg\{p_i(z)+\frac{\lambda \gamma zp'_i(z)}{\vartheta}\bigg\}&\geq\RE\bigg\{p_i(z)-\frac{\lambda \gamma |zp'_i(z)|}{\vartheta}\bigg\}\nonumber\\
&\geq\RE p_i(z)\bigg\{1-\frac{2\lambda \gamma r}{\vartheta(1-r^2)}\bigg\}\nonumber\\
&=\RE p_i(z)\bigg\{\frac{\vartheta(1-r)^2-2\lambda\gamma r}{\vartheta(1-r)^2}\bigg\}.\nonumber
\end{align}		
The right hand 	side of the last inequality is positive if $r<r_1$, where $r_1$ is given by \eqref{eq:radius}. The result is sharp, therefore from \eqref{eq:radius2} we prove that $f\in\mathcal{M}^{m,\gamma}_{\lambda,\alpha,\beta}(k,\vartheta,\rho)$ for $|z|< r_1$.

The sharp function is proven in the following. From \eqref{eq:radii}, we have
$$\bigg(\frac{\mathcal{E}^{m}_{\lambda,\alpha,\beta}f(z)}{z}\bigg)^\vartheta=p(z)=\left(\frac{k}{4}+\frac{1}{2}\right)p_{1}(z)-\left(\frac{k}{4}-\frac{1}{2}\right)p_{2}(z)\in\pe_k(\rho),$$
where $p_i(z)=\frac{1+(1-2\rho)z}{1-z}\ (i=1,2)$, then clearly we  write
\begin{equation}\label{eq:sharp}
(\mathcal{E}^{m}_{\lambda,\alpha,\beta}f(z))^\vartheta=z^\vartheta p(z).
\end{equation}
Differentiating on both sides of the equation \eqref{eq:sharp}, and using  \eqref{eq:recurrence} together with routine calculations, we easily get
$$
(1-\gamma)\bigg(\frac{\mathcal{E}^{m}_{\lambda,\alpha,\beta}f(z)}{z}\bigg)^\vartheta+\gamma\bigg(\frac{\mathcal{E}^{m+1}_{\lambda,\alpha,\beta}f(z)}{z}\bigg)\bigg(\frac{\mathcal{E}^{m}_{\lambda,\alpha,\beta}f(z)}{z}\bigg)^{\vartheta-1}=p(z)+\frac{\lambda\gamma zp'(z)}{\vartheta}\in\pe_k(\rho).
$$
Thus, by applying Hallenbeck and Ruscheweyh \cite{Rusc}, we observe that
\begin{multline}\label{eq:sharp2}
	\bigg(\frac{\mathcal{E}^{m}_{\lambda,\alpha,\beta}f(z)}{z}\bigg)^\vartheta=\frac{\vartheta}{\lambda\gamma}\int_0^1\bigg[\left(\frac{k}{4}+\frac{1}{2}\right)u^{\frac{\vartheta}{\lambda\gamma}-1}\frac{1+(1-2\rho)uz}{1-uz}\\
	-\left(\frac{k}{4}-\frac{1}{2}\right)u^{\frac{\vartheta}{\lambda\gamma}-1}\frac{1-(1-2\rho)uz}{1+uz}\bigg]du
\end{multline}
is the sharp function for  $f\in\mathcal{M}^{m,\gamma}_{\lambda,\alpha,\beta}(k,\vartheta,\rho)$.  We note that for $m=0$, $\lambda=1$, $\alpha=0$ and  $\beta=1$, the sharp function given by \eqref{eq:sharp2} reduces to the sharp function given in \cite{Noor}.
\end{proof}
Putting $m=0$, $\lambda=1$, $\alpha=0$ and  $\beta=1$, Theorem \ref{teo3} reduces to the  following new result.
\begin{cor} If a  function $f\in\mathcal{A}$ satisfies $\big(\frac{f(z)}{z}\big)^\vartheta\in\pe_k(\rho)$, then $f\in\mathcal{M}^{\gamma}(k,\vartheta,\rho)$ for $|z|< r_2$, where
$$r_2=\frac{\gamma+\vartheta-\sqrt{\gamma^2+2\gamma\vartheta}}{\vartheta}.$$
\end{cor}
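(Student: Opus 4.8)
The plan is to obtain this corollary directly from Theorem \ref{teo3} by specializing the parameters, rather than by rerunning the differential-subordination machinery of Lemma \ref{lem1}. First I would record that under the choice $m=0$, $\lambda=1$, $\alpha=0$, $\beta=1$ the Mittag-Leffler operator of \eqref{eq:sri-tom} collapses to the identity on $\mathcal{A}$: when $m=0$ the factor $(1+(n-1)\lambda)^m$ equals $1$, while for $\alpha=0$, $\beta=1$ the Gamma-quotient reduces to $\frac{\Gamma(\beta)}{\Gamma(\alpha(n-1)+\beta)}=\frac{\Gamma(1)}{\Gamma(1)}=1$. Hence $\mathcal{E}^{0}_{1,0,1}f(z)=z+\sum_{n=2}^\infty a_n z^n=f(z)$ for every $f\in\mathcal{A}$.

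With this reduction in hand, the hypothesis $\big(\frac{\mathcal{E}^{m}_{\lambda,\alpha,\beta}f(z)}{z}\big)^\vartheta\in\pe_k(\rho)$ of Theorem \ref{teo3} becomes precisely $\big(\frac{f(z)}{z}\big)^\vartheta\in\pe_k(\rho)$, and the target class $\mathcal{M}^{m,\gamma}_{\lambda,\alpha,\beta}(k,\vartheta,\rho)$ becomes $\mathcal{M}^{\gamma}(k,\vartheta,\rho)$, exactly the identification already recorded in the first Example following Definition \ref{def1.2}. Consequently the conclusion ``$f\in\mathcal{M}^{m,\gamma}_{\lambda,\alpha,\beta}(k,\vartheta,\rho)$ for $|z|<r_1$'' translates verbatim into ``$f\in\mathcal{M}^{\gamma}(k,\vartheta,\rho)$ for $|z|<r_2$.''

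It then remains only to evaluate the radius. Setting $\lambda=1$ in the expression \eqref{eq:radius} for $r_1$ immediately gives $r_2=\frac{\gamma+\vartheta-\sqrt{\gamma^2+2\gamma\vartheta}}{\vartheta}$, which is the asserted bound; and sharpness is inherited from Theorem \ref{teo3}, since under the same parameter choice the extremal function \eqref{eq:sharp2} specializes to the sharp function recorded in \cite{Noor}.

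Since the argument is pure specialization, I expect no substantive obstacle. The only point that genuinely requires care is the verification in the first step that both the $\lambda$-power and the Gamma-quotient in \eqref{eq:sri-tom} reduce to unity, so that the operator really is the identity and the two function classes genuinely coincide; once that bookkeeping is confirmed, the corollary follows at once from Theorem \ref{teo3}.
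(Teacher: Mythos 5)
Your proposal is correct and is essentially the paper's own argument: the authors obtain this corollary by the same pure specialization $m=0$, $\lambda=1$, $\alpha=0$, $\beta=1$ of Theorem \ref{teo3}, under which $\mathcal{E}^{0}_{1,0,1}$ is the identity, $\mathcal{E}^{1}_{1,0,1}f(z)=zf'(z)$, and $r_1$ reduces to $r_2$. Your explicit check that both the $\lambda$-power and the Gamma-quotient in \eqref{eq:sri-tom} collapse to unity is exactly the bookkeeping the paper leaves implicit.
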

\begin{rem} Setting $m=\alpha=0$, $\gamma=\beta=\lambda=\vartheta=1$, and $k=2$ in  Theorem \ref{teo3}, then $f$ is in  $\mathcal{M}^{0,1}_{1,0,1}(2,1,\rho)=:\mathcal{B}_2^1(1,1,1,\rho,1,0)$ for $|z|<2-\sqrt{3}\approx 0.2679$, which was given  in \cite[Theorem 3.4]{Noor2}.
\end{rem}

\section{Application of An Integral Operator}
In this section, we consider an application of the generalized Mittag-Leffler operator given by \eqref{eq:sri-tom} involving  the generalized Bernardi-Libera-Livingston integral operator $\mathcal{L}_{\sigma}:\mathcal{A}\rightarrow \mathcal{A}$ given by
\begin{equation}\label{eq:bernardi}
\mathcal{L}_{\sigma}f(z)=\frac{\sigma+1}{z^\sigma}\int_0^z t^{\sigma-1}f(t)dt \ \ \ \ (\sigma>-1).
\end{equation}
From this operator, we easily get
\begin{equation}\label{eq:int}
z(\mathcal{E}^{m}_{\lambda,\alpha,\beta}\mathcal{L}_{\sigma}f(z))'=(\sigma+1)\mathcal{E}^{m}_{\lambda,\alpha,\beta}f(z)-\sigma \mathcal{E}^{m}_{\lambda,\alpha,\beta}\mathcal{L}_{\sigma}f(z).
\end{equation}
For several special cases of this operator and related operators, one may refer to a survey article by the first two authors \cite{Ahuja2} and related references therein.
\begin{thm}\label{teo4} Let  $f\in\mathcal{A}$ and $\mathcal{L}_{\sigma}f$ be given by \eqref{eq:bernardi}. If
\begin{equation}\label{eq:subo}
\bigg\{(1-\gamma)\frac{\mathcal{E}^{m}_{\lambda,\alpha,\beta}\mathcal{L}_{\sigma}f(z)}{z}+\gamma\frac{\mathcal{E}^{m}_{\lambda,\alpha,\beta}f(z)}{z}\bigg\}\in\pe_k(\rho),
\end{equation}
then
$$\frac{\mathcal{E}^{m}_{\lambda,\alpha,\beta}\mathcal{L}_{\sigma}f(z)}{z}\in\pe_k(\iota), \ \ (z\in\de),$$
where $\iota$ is given by
\begin{equation}\label{eq:iota1}
\iota=\rho+(1-\rho)(2\iota_1-1), \ \ \iota_1=\int_0^1(1+t^{\frac{\gamma}{\sigma+1}})^{-1}dt.
\end{equation}	
\end{thm}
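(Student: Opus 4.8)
The plan is to reduce the statement to the Briot--Bouquet type estimate of Lemma~\ref{lem2} after rewriting the hypothesis entirely in terms of the single function $H(z):=\mathcal{E}^{m}_{\lambda,\alpha,\beta}\mathcal{L}_{\sigma}f(z)/z$. Since $\mathcal{E}^{m}_{\lambda,\alpha,\beta}\mathcal{L}_{\sigma}f(z)=z+\cdots$, the function $H$ is analytic in $\de$ with $H(0)=1$. First I would differentiate the relation $\mathcal{E}^{m}_{\lambda,\alpha,\beta}\mathcal{L}_{\sigma}f(z)=zH(z)$ and feed the result into the operator identity \eqref{eq:int}; solving for $\mathcal{E}^{m}_{\lambda,\alpha,\beta}f(z)/z$ should give
\begin{equation*}
\frac{\mathcal{E}^{m}_{\lambda,\alpha,\beta}f(z)}{z}=H(z)+\frac{zH'(z)}{\sigma+1}.
\end{equation*}
Substituting this together with $\mathcal{E}^{m}_{\lambda,\alpha,\beta}\mathcal{L}_{\sigma}f(z)/z=H(z)$ into the left-hand side of \eqref{eq:subo}, the $(1-\gamma)$ and $\gamma$ terms combine and the hypothesis collapses to
\begin{equation*}
H(z)+\frac{\gamma}{\sigma+1}\,zH'(z)\in\pe_k(\rho).
\end{equation*}
Setting $\zeta:=\gamma/(\sigma+1)$, we have $\RE(\zeta)>0$ because $\RE(\gamma)>0$ and $\sigma+1>0$, so Lemma~\ref{lem2} is applicable with this $\zeta$, and the associated constant $\iota_1$ is precisely the one appearing in \eqref{eq:iota1}.

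Next I would invoke the structural decomposition \eqref{e1.8} of the class $\pe_k(\rho)$: writing $H+\zeta zH'=\left(\tfrac{k}{4}+\tfrac12\right)q_1-\left(\tfrac{k}{4}-\tfrac12\right)q_2$ with $q_1,q_2\in\pe(\rho)$, I would recover a matching decomposition of $H$ itself. Concretely, let $H_i$ be the unique analytic solution of the first-order linear equation $H_i(z)+\zeta zH_i'(z)=q_i(z)$ normalised by $H_i(0)=1$, given explicitly via the integrating factor $z^{1/\zeta}$ by $H_i(z)=\zeta^{-1}\int_0^1 u^{1/\zeta-1}q_i(uz)\,du$. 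By linearity and the uniqueness of the solution of $Y+\zeta zY'=H+\zeta zH'$ with $Y(0)=1$, one obtains $H=\left(\tfrac{k}{4}+\tfrac12\right)H_1-\left(\tfrac{k}{4}-\tfrac12\right)H_2$. Since each $q_i\in\pe(\rho)$ means $\RE\{H_i+\zeta zH_i'\}>\rho$, Lemma~\ref{lem2} yields $\RE H_i(z)>\rho+(1-\rho)(2\iota_1-1)=\iota$, that is $H_i\in\pe(\iota)$ for $i=1,2$. Feeding this back through \eqref{e1.8} gives $H\in\pe_k(\iota)$, which is the assertion.

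The step I expect to be the main obstacle is justifying that the components $H_i$ produced by the ODE actually satisfy the hypothesis $H_i\in\pe$ that Lemma~\ref{lem2} requires, and that the decomposition of $H$ genuinely corresponds to that of $H+\zeta zH'$. The matching is handled by uniqueness of the integrating-factor solution, while $\RE H_i>0$ must be extracted from the integral representation $H_i(z)=\zeta^{-1}\int_0^1 u^{1/\zeta-1}q_i(uz)\,du$ together with $\RE q_i>0$; this is most transparent when $\gamma$, hence $\zeta$, is real, in which case $u^{1/\zeta-1}$ is a positive weight and $H_i$ is an average of values of $q_i$ having positive real part (note $\zeta^{-1}\int_0^1 u^{1/\zeta-1}\,du=1$ recovers $H_i(0)=1$). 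Once $H_i\in\pe$ is secured, the remaining manipulations are the routine logarithmic-differentiation and integrating-factor computations already exploited in the proofs of Theorems~\ref{teo1} and \ref{teo3}.
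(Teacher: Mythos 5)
Your proposal is correct and follows essentially the same route as the paper: use the identity \eqref{eq:int} to rewrite the hypothesis as $p(z)+\tfrac{\gamma}{\sigma+1}zp'(z)\in\pe_k(\rho)$ for $p(z)=\mathcal{E}^{m}_{\lambda,\alpha,\beta}\mathcal{L}_{\sigma}f(z)/z$, decompose via \eqref{e1.8}, and apply Lemma \ref{lem2} componentwise. The only difference is that you justify, via the integrating-factor/uniqueness argument, that the decomposition of $p+\zeta zp'$ induces a matching decomposition of $p$ itself (and that each component lies in $\pe$) --- a point the paper's proof simply takes for granted by writing $p$ as a combination of $p_1,p_2$ at the outset.
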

\begin{proof} Consider the function
$$\frac{\mathcal{E}^{m}_{\lambda,\alpha,\beta}\mathcal{L}_{\sigma}f(z)}{z}=p(z)=\bigg\{\left(\frac{k}{4}+\frac{1}{2}\right)p_{1}(z)-\left(\frac{k}{4}-\frac{1}{2}\right)p_{2}(z)\bigg\}.$$	
Differentiating both sides, and using 	\eqref{eq:int} we get
$$\frac{\mathcal{E}^{m}_{\lambda,\alpha,\beta}f(z)}{z}=p(z)+\frac{zp'(z)}{\sigma+1}.$$
If we use the identity given by \eqref{eq:subo}, we obtain 	
$$(1-\gamma)\frac{\mathcal{E}^{m}_{\lambda,\alpha,\beta}\mathcal{L}_{\sigma}f(z)}{z}+\gamma\frac{\mathcal{E}^{m}_{\lambda,\alpha,\beta}f(z)}{z}=p(z)+\frac{\gamma zp'(z)}{\sigma+1}\in\pe_k(\rho).$$	
This implies that 	
$$\RE\bigg\{p_i(z)+\frac{\gamma zp'_i(z)}{\sigma+1}\bigg\}>\rho,\ \ (i=1,2).$$	
By using Lemma \ref{lem2}, we see that $\RE\{p_i(z)\}>\iota$, where $\iota$ is given by \eqref{eq:iota1}. Thus, we arrive at $p\in\pe_k(\iota)$. This completes the proof.
\end{proof}
Setting $m=0$, $\lambda=1$, $\alpha=0$, and  $\beta=1$ in operator $\mathcal{E}^{m}_{\lambda,\alpha,\beta}f$, Theorem \ref{teo4} gives  the  following result.
\begin{cor} Let  $f\in\mathcal{A}$ and $\mathcal{L}_{\sigma}f$ be given by \eqref{eq:bernardi}. If
$$
\bigg\{(1-\gamma)\frac{\mathcal{L}_{\sigma}f(z)}{z}+\gamma\frac{f(z)}{z}\bigg\}\in\pe_k(\rho),
$$
then
$\frac{\mathcal{L}_{\sigma}f(z)}{z}\in\pe_k(\iota)$, where $\iota$ is given by
$$\iota=\rho+(1-\rho)(2\iota_1-1), \ \ \iota_1=\int_0^1(1+t^{\frac{\gamma}{\sigma+1}})^{-1}dt.$$	
\end{cor}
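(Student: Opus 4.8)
The plan is to obtain this corollary directly from Theorem \ref{teo4} by specializing the parameters, the only substantive point being to check that, under the choice $m=0$, $\lambda=1$, $\alpha=0$, $\beta=1$, the Mittag-Leffler operator $\mathcal{E}^{m}_{\lambda,\alpha,\beta}$ collapses to the identity on $\mathcal{A}$. Reading off the coefficients in \eqref{eq:sri-tom}, the factor $(1+(n-1)\lambda)^m$ equals $1$ because $m=0$, while $\alpha=0$ forces $\Gamma(\alpha(n-1)+\beta)=\Gamma(\beta)$, so each coefficient reduces to $\Gamma(\beta)/\Gamma(\beta)=1$ for every $n\geq 2$. Hence $\mathcal{E}^{0}_{1,0,1}f=f$ for all $f\in\mathcal{A}$, and in particular $\mathcal{E}^{0}_{1,0,1}\mathcal{L}_{\sigma}f=\mathcal{L}_{\sigma}f$.

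With this identification in hand, every occurrence of $\mathcal{E}^{m}_{\lambda,\alpha,\beta}f$ in Theorem \ref{teo4} becomes $f$ and every occurrence of $\mathcal{E}^{m}_{\lambda,\alpha,\beta}\mathcal{L}_{\sigma}f$ becomes $\mathcal{L}_{\sigma}f$. Consequently the hypothesis $\{(1-\gamma)\mathcal{E}^{m}_{\lambda,\alpha,\beta}\mathcal{L}_{\sigma}f(z)/z+\gamma\,\mathcal{E}^{m}_{\lambda,\alpha,\beta}f(z)/z\}\in\pe_k(\rho)$ of Theorem \ref{teo4} turns word for word into the corollary's hypothesis $\{(1-\gamma)\mathcal{L}_{\sigma}f(z)/z+\gamma f(z)/z\}\in\pe_k(\rho)$, and its conclusion $\mathcal{E}^{m}_{\lambda,\alpha,\beta}\mathcal{L}_{\sigma}f(z)/z\in\pe_k(\iota)$ becomes $\mathcal{L}_{\sigma}f(z)/z\in\pe_k(\iota)$. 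Crucially, the value $\iota$ supplied by \eqref{eq:iota1} depends only on $\gamma$ and $\sigma$ through $\iota_1=\int_0^1(1+t^{\gamma/(\sigma+1)})^{-1}dt$, and not on $m,\lambda,\alpha,\beta$; therefore the $\iota$ asserted in the corollary is exactly the one inherited from Theorem \ref{teo4}, and the corollary follows at once.

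Because this is a pure specialization, there is no genuine obstacle; the single point needing a moment's care is the coefficient computation above, where one must note that $\alpha=0$ (so $\RE(\alpha)=0$) is permitted in \eqref{eq:sri-tom} precisely because $\beta=1\neq 0$. Should a self-contained argument be preferred, one can simply replay the proof of Theorem \ref{teo4} with the identity operator: set $p(z)=\mathcal{L}_{\sigma}f(z)/z$, decompose it via \eqref{e1.8} as $p=(\tfrac{k}{4}+\tfrac12)p_1-(\tfrac{k}{4}-\tfrac12)p_2$, use the specialized Bernardi identity $z(\mathcal{L}_{\sigma}f(z))'=(\sigma+1)f(z)-\sigma\mathcal{L}_{\sigma}f(z)$ coming from \eqref{eq:int} to obtain $f(z)/z=p(z)+zp'(z)/(\sigma+1)$, feed the hypothesis to get $p(z)+\gamma zp'(z)/(\sigma+1)\in\pe_k(\rho)$, and apply Lemma \ref{lem2} with $\zeta=\gamma/(\sigma+1)$ to each of $p_1,p_2$ to conclude $\RE p_i(z)>\iota$ and hence $p\in\pe_k(\iota)$.
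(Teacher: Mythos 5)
Your proof is correct and matches the paper exactly: the paper obtains this corollary simply by setting $m=0$, $\lambda=1$, $\alpha=0$, $\beta=1$ in Theorem \ref{teo4}, which is precisely your specialization argument. Your explicit verification that $\mathcal{E}^{0}_{1,0,1}$ reduces to the identity operator (a step the paper leaves implicit) and your optional self-contained replay of the theorem's proof are both consistent with, and slightly more detailed than, what the paper does.
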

\section*{\bf{Conclusion}}
We conclude our investigation by remarking that the defined new generalized class of analytic functions involving the  Mittag-Leffler operator and Bazilevi\u{c} functions gives various well known subclasses of Bazilevi\u{c} functions as particular cases which in turn yields many proved results as corollary. We are investigating the main results to find potentially useful applications in a variety of areas.

\section*{\bf{Data Availability}}

No data were used to support this study.

\section*{\bf{Conflicts of Interest}}

The authors declare that they have no conflicts of interest.

\section*{\bf{Acknowledgment}}

 The authors are grateful to the referee for the helpful suggestions and insights that helped improve the clarity of this manuscript.

\end{document}